\theoremstyle{plain}
\newtheorem{theorem}{Theorem}[section]
\newtheorem{lemma}[theorem]{Lemma}
\newtheorem{cor}[theorem]{Corollary}
\newcommand{\xar}[1]{\ensuremath{\xrightarrow{#1}}}
\newcommand{\mf}[1]{\mathfrak{#1}}
\newcommand{\mc}[1]{\mathcal{#1}}
\newcommand{\mb}[1]{\mathbf{#1}}
\newcommand{\Z}{\mathbb{Z}}
\newcommand{\bs}[1]{\mathbf{#1}}
\newcommand{\on}[1]{\operatorname{#1}}
\newcommand{\la}{\langle}
\newcommand{\ra}{\rangle}
\titleformat{\subparagraph}[runin]{\normalsize\bfseries}{\textbf{(\arabic{subparagraph}).}}{1em}{}[]
\newcommand{\p}[1]{\subparagraph{#1}}
\title{Which circle bundles  can be triangulated over \\ $\partial \Delta^3$?}
\author{N. Mn\"ev%
	\thanks{PDMI RAS;  Chebyshev Laboratory, St. Petersburg State University.
		Supported by the Russian Science Foundation grant №14-21-00035. }}
\date{}	
\begin{document}
\maketitle
\p{Introduction}
 We can solve the  difficult problem in the title posted long ago by nobody.   It seems  that it leads right into  simplicial wildness of classical combinatorial topology. But it happens  that we can manage it.    The answer is \\ Theorem \ref{theo}\~:\begin{quote}
 	 \textit{Having the boundary of a standard 3-dimensional simplex $\partial \Delta^3$ as the base of  a triangulation,  one can triangulate only  trivial and Hopf circle  bundles.}
 \end{quote} This statement is the immediate output of a really difficult construction by bare hands of a minimal triangulation of Hopf bundle in \cite{Madahar:2000} and the very special probabilistic look of the \textit{rational} local combinatorial formula\footnote{G. Gangopadhyay \cite{Gangopadhyay} invented for the formula  a nice name ``counting triangles formula".}  $\mbox{}^p C_1 (-)$ for the Chern-Euler class of triangulated circle bundles \cite{MS}.
 The proof is the following: oriented circle bundles over sphere are  classified up to isomorphism  by their integer characteristic  Chern-Euler number, and these numbers span entire  $H^2(S^2;\Z) \approx \Z$ (see \cite{Chern1977} \cite[Ch. 2]{Brylinski2008} for a survey). The number measures a sort of winding of the total space over the base. The sign of the number depends only on the orientation whereas the absolute value of the number $\mb c$ is of interest.   This is called ``Gauss-Bonnet-Chern" theorem. The formula  $\mbox{}^p C_1 (-)$ endows a triangulated bundle with a simplicial rational  2-cocycle on the base of the triangulation. The rational cocycle represents the integer Chern-Euler class in ordered simplicial cohomology of the base. The value of the cochain on a 2-simplex is a function of combinatorics of the stalk of the triangulation over that simplex. Chern-Euler integer number of the bundle is the result of the pairing of this rational cochain with the fundamental class.  The key point coming from the formula  $\mbox{}^p C_1 (-)$   is that a stalk of a  triangulation over a simplex gives a contribution $< \frac{1}{2}$ into the rational simplicial  Chern-Euler cochain on the base.   Therefore  having only four 2-simplices in $\partial \Delta^3$ we can, for any triangulation over $\partial \Delta^3$, achieve integer Chern-Euler number $\bs c < 2$. For $\bs c =0$ then we have trivial bundle and we can triangulate it for sure as a product of base and fiber triangulations, when  $\bs c =1$ it is Hopf bundle $S^1 \xar{}S^3 \xar{h} \partial \Delta^3$ and it was triangulated by Madahar and Sarkara. The other Chern-Euler numbers we can't expect.
 It is  interesting to know that Seifert fibrations $S^3 \xar{} S^2$ with any Hopf invariant can be triangulated over $\partial \Delta^3$ \cite{Madahar2002}.  Here we recall a few points from \cite{MS} and add Lemma
 \ref{lem} providing the proof for the central statement.
 \p{Elementary simplicial circle bundles over a $k$-simplex and $(k+1)$-co\-lo\-red necklaces.} \label{word}
 \textit{Elementary simplicial circle bundle} (elementary s.c. bundle) \cite[\S 3]{MS} over an ordered $k$-simplex $\la k \ra$ is
 a map $\mf R \xar{e} \la k\ra $ of a simplicial complex $\mf R$ onto $\la k\ra$, whose geometric realization $|\mf e|$ is a trivial PL fiber bundle over the geometric simplex $\Delta^k$ with fiber $S^1$. We  equip $\mf e$ with orientation - fixed generator of 1-dimensional integer homology of $\mf R$.  Figure \ref{eb} presents a picture of an elementary s.c. bundle over the 1-simplex $\la 1\ra$.
 \begin{figure}[h!]
 	\begin{center}
 		\includegraphics[width=3.0in]{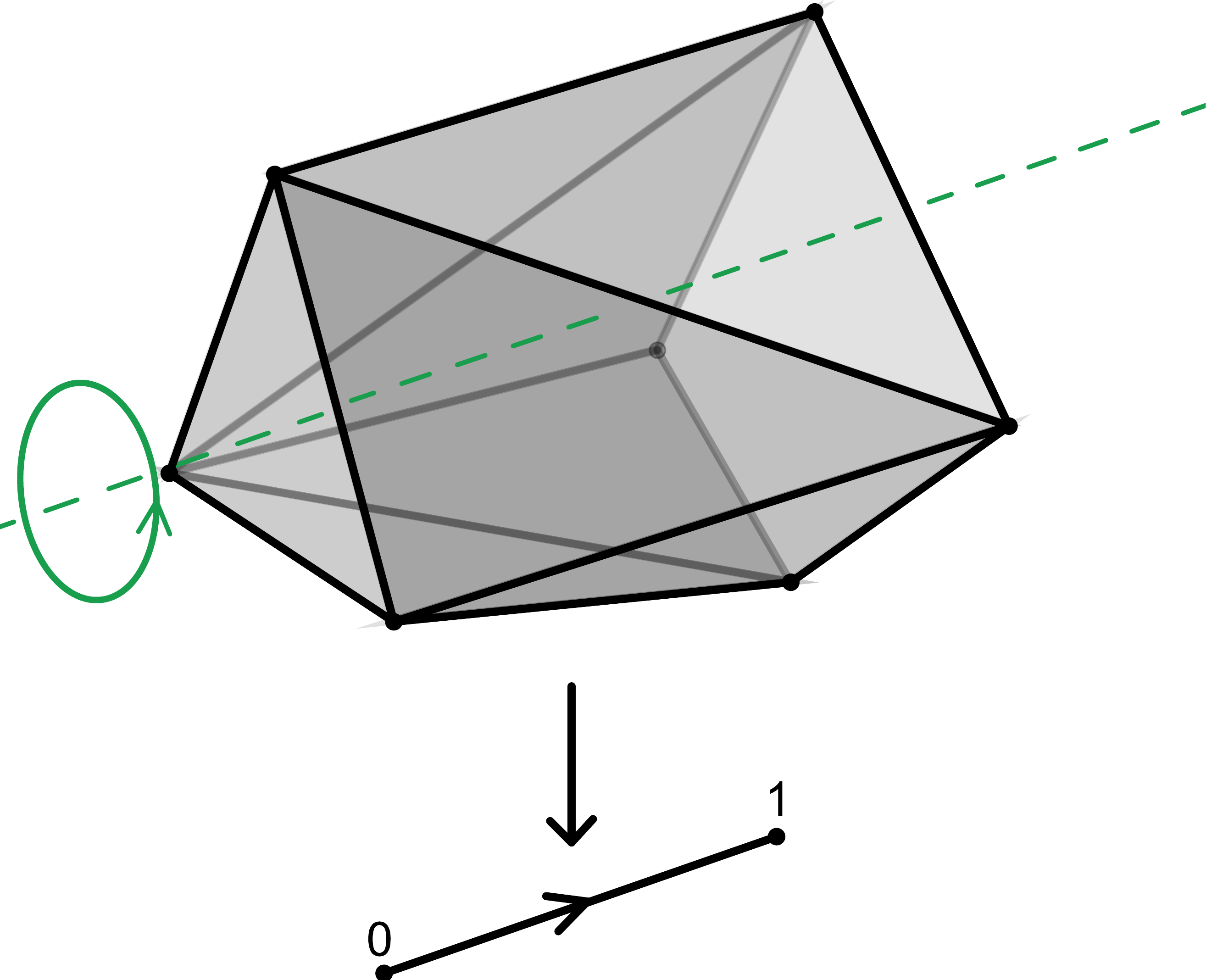} 
 		\caption{ Elementary simplicial circle bundle over the interval. \label{eb}}
 	\end{center}
 \end{figure}
 We are especially interested in elementary s.c. bundles over the $2$-simplex $\la 2 \ra$.
 To an elementary	 simplicial circle bundle $\mf e$ over $\la k \ra$ having $n$ maximal $k+1$-dimensional simplices in the total space,   we associate a $k+1$-necklace $\mc N (\mf e)$, i.e. a length $n$ cyclic word  in the ordered alphabet of $k+1$ letters numbered by the vertices of the base simplex. Any $k+1$ - dimensional simplex of $\mf R$ has a single  edge which shrinks to a vertex $i$ of the base simplex by elementary simplicial degeneration $\la k+1 \ra \xar{\la \sigma_i\ra} \la k\ra, i=0,1,2,...,k$.  Take the general fiber of the projection $|\mf e|$. It is a circle broken into $n$ intervals  oriented by the orientation of the bundle, and every interval on it is an intersection with a maximal $(k+1)$-simplex. The maximal simplex is uniquely named by a vertex of the base where its collapsing edge collapses.  This creates a coloring of the $n$ intervals by $k+1$ ordered vertices of base simplex.  Thus we got a necklace $\mc N (\mf e)$ out of the combinatorics of $\mf e$ (\cite[\S 16]{MS}).  The process is illustrated on Fig \ref{eb2}.

 \begin{figure}[h!]
 	\begin{center}
 		\includegraphics[width=5.0in]{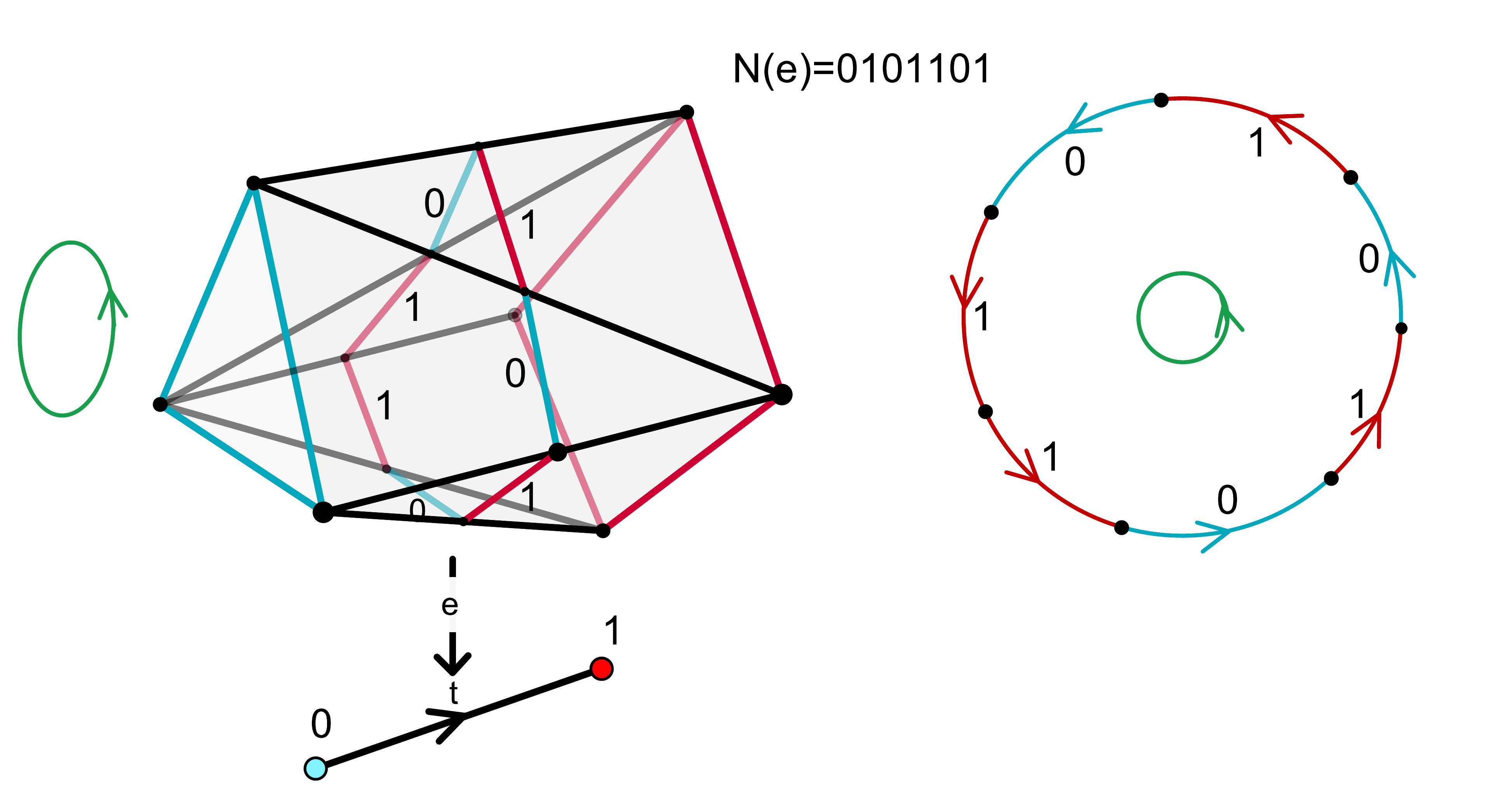}
 		\caption{ \label{eb2}}
 	\end{center}
 \end{figure}

 \p{Parity local formula for Chern-Euler class.}
 Now we switch to the case of 2-di\-men\-sional simplices in the base.
 We have a theorem \cite[Theorem 4.1 ]{MS}.
 A length $n$ word in the alphabet  $[2] = \{0,1,2\}$ in 3 ordered letters can be viewed
 as a surjective map $[n]\xar{w}[2]$ (the map in not required to be monotone).
 Proper subwords of $w$ are sections of this map -- 3-letters subwords, having all the letters different. The proper subword is a permutation of 3 elements and it has a parity -- even or odd.
  We define the rational parity of $w$ as the expectation value of parities of all its proper subwords. Namely, we set
  \begin{equation}\label{par}
  P(w) = \frac{\#(\text{even proper subwords})-\#(\text{odd proper subwords})}{\#(\text{all proper subwords})}
  \end{equation}
 Parity of a permutation of 3 elements is invariant under cyclic shifts.
 Therefore $P(w)$ is invariant under cyclic shifts of the word, and therefore is an invariant of an oriented necklace with beads colored in tree colors ``$0$",``$1$",``$2$".   We define a function of an elementary s.c. bundle over the $2$-simplex
 by the formula
 \begin{equation}\label{chern}
 \mbox{}^p C_1 (\mf e) = -\frac{1}{2} P(\mc N(\mf e))
\end{equation}
The theorem  \cite[Theorem 4.1 ]{MS} states that the rational number  $\mbox{}^p C_1 (\mf e)$ is a local formula for the Chern-Euler class of a triangulated circle bundle.

It follows, by Chern-Gauss-Bonnet theorem, that for any triangulated
circle bundle $\mf E \xar{\mf p } \partial \Delta^3$ over $\partial \Delta^3$, the integer Chern-Euler number
of $|\mf p|$ can be  computed as the value of the parity cocycle (\ref{chern}) on fundamental class. This means the following. Let the simplicial complex $\partial \Delta^3$ be 	ordered and let us fix an orientation of  $\partial \Delta^3$, i.e. the fundamental class is fixed.
Then the absolute Chern-Euler number of $|\mf p|$ is computed as a sum
\begin{equation}\label{sum}
\mb c (|\mf p|) = \on{abs} \left( \sum_{\sigma \in \partial \Delta^3(2)}(-1)^{\on{or}\sigma} (\mbox{}^p C_1(\mf p_\sigma))\right)
\end{equation}
The sum is
running over all four $2$-dimensional ordered simplices of $\sigma \in \partial \Delta^3(2)$. It sums the values of formula $\mbox{}^p C_1 (\mf p_\sigma)$ on elementary subbundles of $\mf p$ over simplices $\sigma$ with  signs
$$(-1)^{\on{or}\sigma}=\begin{cases}1 & \text{if orientation of $\sigma$ consides with global orientaton of $\partial \Delta^3$}  \\
-1 & \text{otherwise}  \end{cases}$$
\p{Extremal parity expectation value.}
In addition to the existing knowledge, we need the following lemma.
 \begin{lemma}\label{lem}
For any elemantary  s.c. bundle $\mf e$ over the 2-simplex,  one has $$\on{abs}(\mbox{}^pC_1 (\mf e)) < \frac{1}{2}$$  	
\end{lemma}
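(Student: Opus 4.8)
The plan is to show that the rational parity $P(\mc N(\mf e))$, which always satisfies $\on{abs}(P(\mc N(\mf e)))\le 1$, never equals $\pm 1$; since ${}^pC_1(\mf e)=-\tfrac12 P(\mc N(\mf e))$, this is exactly the assertion. As $P(w)$ is the expectation of the parity of a uniformly random proper subword, $\on{abs}(P(w))=1$ means precisely that all proper subwords of $w$ have one and the same parity.

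I would first isolate the purely combinatorial part. Claim: a cyclic word $w$ on $\{0,1,2\}$ using all three letters has all proper subwords of one parity if and only if, up to a cyclic rotation, $w$ equals $0^{p}1^{q}2^{r}$ (all parities even) or $0^{p}2^{q}1^{r}$ (all odd); equivalently, each of the three colours occupies a single contiguous arc of the necklace. The ``if'' direction is immediate, since then any proper subword takes one bead from each arc, so its three beads occur in the fixed cyclic colour order and its parity is constant. For ``only if'' I would rotate $w$ so that it starts with a $0$-bead preceded by a non-$0$ bead, and then read off the constraints imposed by ``all subwords even'': the subwords through the initial $0$ force every $1$ to precede every $2$; comparing with an arbitrary $0$-bead then forces every $0$ to lie before the first $1$ or after the last $2$; a short final bookkeeping step excludes $0$'s after the last $2$ unless $w$ is already $0^{p}1^{q}2^{r}$. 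This part is elementary if a little fiddly.

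Next I would record two consequences of the bundle structure for $\mc N(\mf e)$, where $\mf e\colon\mf R\to\la 2\ra$. First, each colour appears at least three times: the preimage of a base vertex is the circle fibre over it, whose edges are exactly the collapsing edges of the tetrahedra of that colour, and a subcomplex homeomorphic to $S^1$ has at least three edges, so there are at least three such tetrahedra. Second, if the beads of one colour, say $0$, form a single run in the necklace, then all colour-$0$ tetrahedra share a common vertex $y$ lying over the base vertex $1$ and a common vertex $z$ lying over the base vertex $2$, hence all contain the edge $\ol{yz}$: consecutive tetrahedra of the necklace meet along a $2$-face mapping onto $\Delta^2$ (the general fibre meets it transversally in one point), so this face carries exactly one vertex over $1$ and one over $2$, which must be the unique such vertices of each of the two colour-$0$ tetrahedra, and transitivity along the run finishes it.

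Finally I would put it together. Suppose $\on{abs}(P(\mc N(\mf e)))=1$. By the word claim the colour-$0$ beads form one run, and by the first consequence there are at least three of them, so the second consequence applies and every colour-$0$ tetrahedron contains $\ol{yz}$. Now $\ol{yz}$ lies over the edge $\la 12\ra\subset\partial\la 2\ra$, hence $\ol{yz}\subset\partial\mf R$, the boundary torus of the solid torus $|\mf R|\cong D^2\times S^1$; therefore the link of $\ol{yz}$ in the triangulated compact $3$-manifold $\mf R$ is an arc. On the other hand, inside each colour-$0$ tetrahedron the link of $\ol{yz}$ is the opposite edge, which is that tetrahedron's collapsing edge, and the union of the collapsing edges of all colour-$0$ tetrahedra is exactly the circle fibre over the base vertex $0$; so the link of $\ol{yz}$ contains a subcomplex homeomorphic to $S^1$, which an arc cannot. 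This contradiction gives $\on{abs}(P(\mc N(\mf e)))<1$, i.e.\ $\on{abs}({}^pC_1(\mf e))<\tfrac12$. I expect the main difficulty to be precisely this middle geometric step — turning the combinatorial ``single run'' into the statement that a whole family of tetrahedra shares one edge, and locating that edge in $\partial\mf R$; the word claim is routine bookkeeping and the link statement is a standard fact about triangulated manifolds.
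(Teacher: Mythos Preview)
Your argument is correct. After the shared combinatorial reduction (all proper subwords of one parity $\Longleftrightarrow$ each colour occupies a single arc of the necklace), the paper proceeds quite differently from you: it restricts the bundle to the face $\la \delta_2\ra\colon\la 1\ra\hookrightarrow\la 2\ra$, obtaining a triangulated annulus whose necklace is the two-block word $0\cdots01\cdots1$, and then observes directly that the two ``transition'' edges between the $0$-block and the $1$-block have the same pair of endpoints, so $\delta_2^*\mf R$ already fails to be a simplicial complex. Your route stays in the three-dimensional total space: from the single-run condition you extract a common edge $\ol{yz}$ of all colour-$0$ tetrahedra lying in $\partial\mf R$, and then use the PL-manifold structure (the link of a boundary edge is an arc) together with the observation that this link contains the entire fibre circle over the vertex $0$. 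The paper's argument is shorter and more elementary---it needs only the axiom that two edges cannot share both endpoints, and a two-dimensional picture suffices---whereas yours invokes links in combinatorial $3$-manifolds. On the other hand, your approach makes the geometric obstruction vivid (an entire fibre circle collapses into the link of a single boundary edge) and would transport more readily to analogous questions in higher dimensions. One small remark: the ``at least three'' observation is not needed to run your transitivity step for the shared edge; it is only the fact that the fibre over $0$ is a simplicial $S^1$---which already forces $\ge 3$ edges---that you use at the very end.
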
 	
\begin{proof} Obviously, $\on{abs}(\mbox{}^pC_1 (\mf e))\leq\frac{1}{2}$.
If $\on{abs}(\mbox{}^pC_1 (\mf e))=\frac{1}{2}$, then  we look at  expectation value  (\ref{par}) and conclude that in the necklace $\mc N (\mf e)$ with probability one all the proper  subwords are simultaneously  even or simultaneously odd.  Therefore the necklace $\mc N(\mf e)$ looks like the word  $$\mc N (\mf e)=0000...1111...2222$$ up to cyclic shifts and reordering of letters. Namely, all the 3 different letters ``0",``1",``2" stays in 3 solid  blocks.   Now let us see what it would mean for the bundle $$\mf R \xar{\mf e} \la 2\ra$$ to have such a necklace.
Consider the face of the base simplex $\la 1\ra \xar{\la \delta_2 \ra} \la 2\ra$ and the induced subbundle $$\delta_2^* \mf e  = \delta_2^* \mf R \xar{\delta_2^* \mf e}\la 1 \ra$$ over the interval $\la 1 \ra$.
In this case (\cite[\S\S 16,17 ]{MS}) the necklace $ \mc N (\delta_2^* \mf e)$ of subbundle $\delta_2^*\mf e$  is the result of deletion of letter ``2" from $\mc N (\mf e)$ and looks as follows: $$\delta_2^* \mc N (\mf e) = 0000011111 \sim 1100000111$$
We may hope to  draw the total complex $\delta_2^*\mf R$ by reverse engineering the process of \S\ref{word}. It is a 2-dimensional picture (see Fig. \ref{le}, where upper and lower sides of the rectangle are identified).
\begin{figure}[h!]
	\begin{center}
		\includegraphics[width=5.0in]{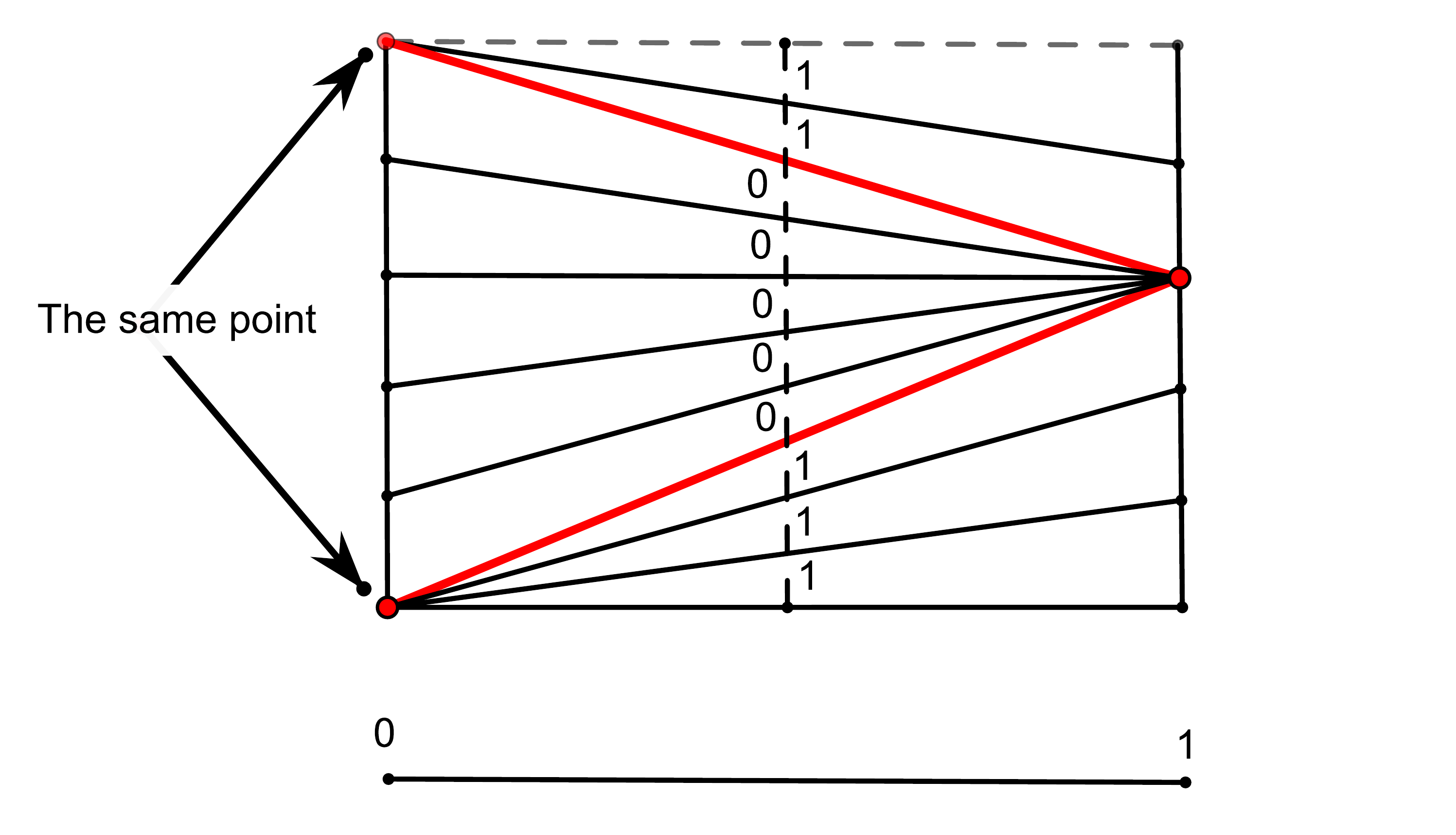}
		\caption{ \label{le}}
	\end{center}
\end{figure}
The edges of   $\delta_2^* \mf R $ which are  projecting surjectively onto the base interval corresponds to pairs of adjacent triangles, which in turn corresponds to pairs of cyclically adjacent letters in the cyclic word. In particular there are two distinguished  edges corresponding to two (cyclic) boundaries of two blocks:
$1(10)000(01)11$.
What we see is that these boundary  edges have both vertices in common. Therefore  $\delta_2^* \mf R $ is not a simplicial complex, therefore $\mf R $  is not a simplicial complex.

We deduced that for an elementary s.c. bundle the value $|\mbox{}^pC_1 (\mf e)|$ cannot be equal to $\frac{1}{2}$ and hence should  be  strictly smaller.
\end{proof}	
\begin{cor} \label{cor2}
A bundle triangulated over $\partial \Delta^3$ should have Chern-Euler number $< 2$	
\end{cor}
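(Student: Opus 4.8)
The plan is to feed the local bound of Lemma~\ref{lem} into the global summation formula~(\ref{sum}). First I would record the elementary combinatorial fact that the boundary of the $3$-simplex has exactly four $2$-dimensional faces, so that the index set $\partial \Delta^3(2)$ occurring in~(\ref{sum}) has exactly four elements. Second, for a triangulated circle bundle $\mf E \xar{\mf p} \partial \Delta^3$ and any $2$-simplex $\sigma$ of the base, the stalk $\mf p_\sigma$ is a simplicial map onto $\la 2 \ra$ whose geometric realization is a PL $S^1$-bundle over the contractible simplex $\Delta^2$, hence a trivial one; thus $\mf p_\sigma$ is an elementary s.c. bundle over $\la 2 \ra$ in the sense of \S\ref{word}, and Lemma~\ref{lem} applies to it, yielding $\on{abs}(\mbox{}^p C_1(\mf p_\sigma)) < \frac{1}{2}$ for each of the four faces.

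The corollary then follows from a single application of the triangle inequality to~(\ref{sum}): since the orientation signs $(-1)^{\on{or}\sigma}$ have absolute value $1$ they disappear under the outer absolute value, and the estimate reads
\[
\mb c(|\mf p|) = \on{abs}\!\left(\sum_{\sigma \in \partial \Delta^3(2)}(-1)^{\on{or}\sigma}\,\mbox{}^p C_1(\mf p_\sigma)\right) \le \sum_{\sigma \in \partial \Delta^3(2)} \on{abs}\bigl(\mbox{}^p C_1(\mf p_\sigma)\bigr) < 4 \cdot \frac{1}{2} = 2 ,
\]
the last inequality being strict precisely because each of the four summands is strictly below $\frac{1}{2}$ by Lemma~\ref{lem}. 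Hence $\mb c(|\mf p|) < 2$, which is the assertion.

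I do not expect any genuine obstacle in this deduction: all of the content is already in Lemma~\ref{lem}, whose one non-formal ingredient is the \emph{strictness} of the per-simplex bound, i.e.\ the exclusion of the extremal three-block necklaces $0\cdots 0\,1\cdots 1\,2\cdots 2$ through the requirement that the total complex $\mf R$ be an honest simplicial complex rather than merely a $\Delta$-complex. Granting that, the corollary is the arithmetic observation that four faces, each contributing less than $\frac{1}{2}$ in absolute value to the rational Chern-Euler cocycle~(\ref{chern}), cannot push the integer pairing with the fundamental class up to $2$. Since $\mb c(|\mf p|)$ is moreover a non-negative integer, it can only equal $0$ or $1$ --- which is exactly what is needed for Theorem~\ref{theo}.
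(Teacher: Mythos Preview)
Your argument is correct and is essentially the paper's own proof: invoke Lemma~\ref{lem} to bound each of the four summands in~(\ref{sum}) by $\frac{1}{2}$ in absolute value, then apply the triangle inequality. The paper says exactly this in one sentence; your extra remarks (checking that each stalk $\mf p_\sigma$ is an elementary s.c.\ bundle, and the concluding observation that the integer $\mb c$ must then be $0$ or $1$) are harmless elaborations of the same idea.
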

\begin{proof}
This follows from Lemma \ref{lem} and our in-house Gauss-Bonnet-Chern formula  (\ref{sum}). The sum of four summands, each having absolute value smaller than $\frac{1}{2}$, has absolute value smaller then 2.  	
\end{proof}
\begin{theorem}\label{theo}
Having the boundary of the standard 3-dimensional simplex $\partial \Delta^3$ as a base of triangulation,  one can triangulate only  trivial and Hopf circle  bundles.
\end{theorem}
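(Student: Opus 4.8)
The plan is to assemble the machinery already in place. By the Chern-Gauss-Bonnet classification, oriented circle bundles over $S^2$ are determined up to isomorphism by their integer Chern-Euler number, and since reversing the orientation only flips its sign, the isomorphism type of such a bundle is pinned down by the nonnegative integer $\bs c$. Any triangulation $\mf E \xar{\mf p}\partial\Delta^3$ geometrically realizes to a genuine PL $S^1$-bundle over $|\partial\Delta^3|\approx S^2$: it is PL-trivial over each simplex by the very definition of an elementary s.c. bundle, and these local trivializations glue over the $2$-sphere. Hence $|\mf p|$ carries a well-defined number $\bs c(|\mf p|)\in H^2(S^2;\Z)\approx\Z$, computed by the in-house Gauss-Bonnet-Chern formula (\ref{sum}).

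Now I would apply Corollary \ref{cor2}: $\bs c(|\mf p|) < 2$. Therefore the only bundles that can possibly be triangulated over $\partial\Delta^3$ are those with $\bs c = 0$ and $\bs c = 1$, i.e.\ the trivial bundle $S^1\times S^2$ and the Hopf bundle $S^1 \xar{} S^3 \xar{h}\partial\Delta^3$. To see that the list is exact, I would then exhibit triangulations realizing both values. For $\bs c = 0$ this is immediate: take the product of (ordered) $\partial\Delta^3$ with a triangulated circle and project to the first factor; its realization is the trivial PL circle bundle. For $\bs c = 1$ the required triangulation is the explicit minimal one of the Hopf bundle built by bare hands in \cite{Madahar:2000}; evaluating (\ref{sum}) on it returns $1$ (as guaranteed by \cite{MS}), which certifies that the bundle it triangulates is Hopf and not the trivial one.

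The genuinely hard inputs lie upstream and are assumed here: that (\ref{chern}) is a local combinatorial formula representing the Chern-Euler class, from \cite{MS}, and the strict inequality of Lemma \ref{lem}, whose whole point is that the extremal value $\tfrac12$ would force the ``three solid blocks'' necklace and hence a non-simplicial stalk. Granting those, the proof of the theorem itself is essentially bookkeeping, and the only residual point demanding care is the claim made in the first paragraph — that a triangulated bundle over $\partial\Delta^3$ really is classified by an honest element of $H^2(S^2;\Z)\approx\Z$, with no non-integer or otherwise exotic possibilities — since without it one could not conclude that the strict bound $\bs c < 2$ leaves exactly the two integers $0$ and $1$. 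This is precisely what the PL circle-bundle structure of $|\mf p|$ supplies, so I expect no real obstacle beyond stating it cleanly.
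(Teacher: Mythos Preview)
Your proposal is correct and follows essentially the same route as the paper: invoke Corollary~\ref{cor2} to force $\bs c(|\mf p|)<2$, hence $\bs c\in\{0,1\}$, and then realize both values by the product triangulation and by the Madahar--Sarkaria triangulation of the Hopf bundle. The paper's own proof is a two-line version of exactly this; your additional care in spelling out why $|\mf p|$ is an honest PL circle bundle classified by an integer is sound but already absorbed into the paper's introductory setup.
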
	
\begin{proof}
From  Corollary \ref{cor2} it follows that we may hope to triangulate only trivial and Hopf bundles. 	
Trivial bundle can be triangulated as the product triangulation of base and fiber; Hopf bundle was triangulated by Madahar and Shakara \cite{Madahar:2000}. 	
\end{proof}	
\p{Semi-simplicial boundary.}
The bundle triangulation on Fig. \ref{le} is ``semi-simplicial." Only semi-simplicial  triangulations realizes the extremal case $\mb c = 2$ -- tangent bundle of $S^2$. We hope to cover these very interesting triangulations somewhere else in more details.  We consider this behavior of the bundle combinatorics in relation to base combinatorics as a nice output of the local formula  (\ref{chern}).

\def\cprime{$'$} \def\cprime{$'$}


\begin{thebibliography}{{Gan}17}
	
	\bibitem[{Bry}08]{Brylinski2008}
	Jean-Luc {Brylinski}.
	\newblock {\em {Loop spaces, characteristic classes and geometric quantization.
			Reprint of the 1993 edition.}}
	\newblock Basel: Birkh\"auser, reprint of the 1993 edition edition, 2008.
	
	\bibitem[Che77]{Chern1977}
	Shiing-shen Chern.
	\newblock Circle bundles.
	\newblock In Jacob Palis and Manfredo do~Carmo, editors, {\em Geometry and
		Topology}, volume 597 of {\em Lecture Notes in Mathematics}, pages 114--131.
	Springer Berlin Heidelberg, 1977.
	
	\bibitem[{Gan}17]{Gangopadhyay}
	G.~{Gangopadhyay}.
	\newblock {Counting triangles formula for the first Chern class of a circle
		bundle}.
	\newblock {\em ArXiv e-prints}, December 2017, 1712.03024.
	
	\bibitem[Mad02]{Madahar2002}
	Keerti~Vardhan Madahar.
	\newblock Simplicial maps from the 3-sphere to the 2-sphere.
	\newblock {\em Adv. Geom.}, 2(2):99--106, 2002.
	
	\bibitem[MS00]{Madahar:2000}
	K.~V. Madahar and K.~S. Sarkaria.
	\newblock A minimal triangulation of the {H}opf map and its application.
	\newblock {\em Geom. Dedicata}, 82(1-3):105--114, 2000.
	
	\bibitem[MS17]{MS}
	N.~{Mn{\"e}v} and G.~{Sharygin}.
	\newblock {On local combinatorial formulas for Chern classes of a triangulated
		circle bundle.}
	\newblock {\em {J. Math. Sci., New York}}, 224(2):304--327, 2017.
	
\end{thebibliography}
\end{document}